\newtheorem{theorem}{Theorem}[section]
\newtheorem{lemma}[theorem]{Lemma}
\numberwithin{equation}{section}
\begin{document}

\title[Zeros of the extended Selberg class zeta-functions] {Zeros of the extended Selberg class zeta-functions and of their derivatives  
}

\author{ Ram\={u}nas Garunk\v{s}tis}
\address{Ram\={u}nas Garunk\v{s}tis \\
Faculty of Mathematics and Informatics, Institute of Mathematics, Vilnius University \\
Naugarduko 24, 03225 Vilnius, Lithuania}
\email{ramunas.garunkstis@mif.vu.lt}
\urladdr{www.mif.vu.lt/~garunkstis}

\subjclass[2010]{Primary: 11M26; Secondary: 11M41}

\keywords{Riemann zeta-function; extended Selberg class; nontrivial zeros; Speiser's equivalent for the Riemann hypothesis}

\begin{abstract} 
Levinson and Montgomery proved that the
Riemann zeta-function $\zeta(s)$  and its derivative  have approximately the same number of non-real zeros  left of the critical line. 
R. Spira showed that $\zeta'(1/2+it)=0$ implies $\zeta(1/2+it)=0$. Here we obtain that in  small areas located to the left of the critical line and near it the  functions $\zeta(s)$ and $\zeta'(s)$ have the same number of zeros.  We prove our result for more general zeta-functions from the extended Selberg class $S$. We also consider zero trajectories of a certain family of  zeta-functions from $S$.
\end{abstract}

\maketitle

\section{Introduction}

Let $s=\sigma+it$.  In this paper, $T$ always tends to plus infinity.

Speiser \cite{Speiser1934} showed that the Riemann hypothesis (RH) is
equivalent to the absence of non-real zeros of the derivative of the
Riemann zeta-function $\zeta(s)$ left of the critical line $\sigma=1/2$. Later on, Levinson and Montgomery \cite{Levinson1974} obtained the quantitative version of the
Speiser's result:

\bigskip

\noindent {\bf Theorem (Levinson-Montgomery)}\, {\it Let $N^-(T)$ be the number of zeros of $\zeta(s)$ in $R: 0<t<T, 0<\sigma<1/2$. Let $N^-_1(T)$ be the number of zeros of $\zeta'(s)$ in $R$. Then
$N^-_1(T)=N(T)+O(\log T).$ 

Unless $N^-(T)>T/2$ for all large $T$ there exists a sequence $\{T_j\}$, $T_j\to\infty$ as $j\to\infty$ such that
$N^-_1(T_j)=N^-(T_j).$
}
\bigskip

Here we prove the following theorem.
\begin{theorem}\label{dertest}
 There is an absolute constant
\(T_0>0\)  such that, for any 
\(T>T_0\) and any \(A>0.17\), there is a radius \(r\),

\begin{equation*}
\exp(-T^A)\le r\le\exp(-T^{A-0.17}),
\end{equation*}
 such that in the region

\begin{equation*}
\{s : |s-(1/2+iT)|\le r\ {\text{and}}\ \sigma<1/2\}
\end{equation*}
the functions \(\zeta(s)\) and \(\zeta'(s)\) have the same number of zeros.
\end{theorem}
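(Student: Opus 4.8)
The plan is to translate the equality of zero counts into a statement about the winding of the logarithmic derivative $\zeta'/\zeta$, and then to control that winding by choosing the radius well. Write $c=1/2+iT$ and, for an admissible $r$, let $D^-(r)=\{s:|s-c|\le r,\ \sigma<1/2\}$, whose boundary splits into the diameter $I_r\subset\{\sigma=1/2\}$ and the left semicircle $C_r$. Let $N^-(r)$ and $N_1^-(r)$ count the zeros of $\zeta$ and of $\zeta'$ in $D^-(r)$. If $r$ is chosen so that neither function vanishes on $\partial D^-(r)$, the argument principle gives
\[
N_1^-(r)-N^-(r)=\frac{1}{2\pi}\,\Delta_{\partial D^-(r)}\arg\frac{\zeta'}{\zeta}(s),
\]
the bookkeeping being consistent because a zero of $\zeta$ of order $m$ is a simple pole of $\zeta'/\zeta$ and a zero of $\zeta'$ of order $m-1$. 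Since the left-hand side is an integer, it suffices to find an $r$ for which the total variation of $\arg(\zeta'/\zeta)$ is less than $2\pi$; I will in fact aim to show that $\zeta'/\zeta$ maps $\partial D^-(r)$ into the open left half-plane $\{\operatorname{Re}<0\}$, which forces winding number $0$ and hence $N_1^-(r)=N^-(r)$.

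The sign of $\operatorname{Re}(\zeta'/\zeta)$ is governed by two facts. On the diameter, writing $\zeta(1/2+it)=e^{-i\theta(t)}Z(t)$ with Hardy's real $Z$ and the Riemann--Siegel $\theta$, one finds
\[
\operatorname{Re}\frac{\zeta'}{\zeta}(1/2+it)=-\theta'(t)=-\tfrac12\log\frac{t}{2\pi}+O(t^{-2})<0,
\]
the quantitative form of Spira's observation, so $\zeta'/\zeta$ sends $I_r$ into $\{\operatorname{Re}<0\}$ away from poles, a pole on $I_r$ being absorbed by a small indentation into $\sigma<1/2$ on which $(\zeta'/\zeta)(s)\sim(s-\rho)^{-1}$ again has negative real part. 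On the semicircle I combine the functional equation in the form $(\zeta'/\zeta)(s)=(\chi'/\chi)(s)-(\zeta'/\zeta)(1-s)$, where $(\chi'/\chi)(s)=-\log(T/2\pi)+O(1/T)$ is a large negative real, with the partial-fraction expansion: each zero $\rho$ of $\zeta$ near $c$ contributes a term $(s-\rho)^{-1}$, and for $s\in C_r$ one has $\operatorname{Re}(s-\rho)=\operatorname{Re} s-\operatorname{Re}\rho<0$ as soon as $\operatorname{Re}\rho\ge 1/2$, so $\operatorname{Re}(s-\rho)^{-1}<0$. Thus, as long as every zero that influences $C_r$ lies on or to the right of the critical line, all contributions to $\operatorname{Re}(\zeta'/\zeta)$ are negative and the desired sign persists on $C_r$ as well.

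What remains, and where the real work lies, is the choice of $r$ and the control of zeros to the left of the line. For the first, I bound the number $\nu$ of zeros of $\zeta$ and of $\zeta'$ within $\exp(-T^{A-0.17})$ of $c$ by Jensen's formula together with the subconvexity bound $\zeta(1/2+it)\ll t^{1/6+\varepsilon}$, which limits $\log|\zeta|$ on the relevant circles; $\nu$ turns out to be so small that the admissible range of radii, of logarithmic width $T^A-T^{A-0.17}$, contains a subinterval of logarithmic width $\gg 1$ on which $|s-c|$ avoids every zero modulus. Choosing $r$ there keeps $C_r$ at logarithmic distance $\gg 1$ from all nearby zeros, so the regular part of $\zeta'/\zeta$ varies by $\ll\log T$ across $C_r$ and cannot overturn the dominant $-\log(T/2\pi)$; the condition $A>0.17$ is precisely what the exponent $1/6$ forces for this subinterval to be nonempty. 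The genuinely hard case is a zero $\rho$ with $\operatorname{Re}\rho<1/2$ lying close to $c$: such a zero is itself counted by $N^-(r)$, and the functional-equation identity above pairs it with a zero of $\zeta'$ inside $D^-(r)$, so that the equality $N_1^-(r)=N^-(r)$ is maintained; establishing this pairing uniformly at the scale $\exp(-T^A)$ is the main obstacle I expect to confront.
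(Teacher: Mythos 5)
Your overall strategy coincides with the paper's: reduce to the argument principle on the half-disc, show $\operatorname{Re}(\zeta'/\zeta)<0$ on the entire (indented) boundary, handle the diameter via $\operatorname{Re}(\zeta'/\zeta)(1/2+it)=-\theta'(t)<0$, and choose $r$ by a pigeonhole argument so that the circle stays away from zeros. But there is a genuine gap, and you name it yourself: the case of a zero $\rho$ with $\operatorname{Re}\rho<1/2$ close to $1/2+iT$. Your semicircle estimate only covers zeros with $\operatorname{Re}\rho\ge 1/2$, and the proposed repair --- a ``pairing'' of each such off-line zero with a zero of $\zeta'$ inside the half-disc, deduced somehow from the functional equation --- is not carried out and is essentially the whole content of the theorem in the only case where it is not vacuous. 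As written, the proposal proves nothing beyond what follows from assuming RH locally.

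The paper closes this gap without any pairing, and the mechanism is worth absorbing. By Trudgian's bound there are fewer than $0.225\log T$ zeros in $|t-T|\le 1/T$, while the nested radii $r_k=\exp(-(2.1)^{-k}T^A)$, $k\le\frac{0.17}{\log 2.1}\log T\approx 0.229\log T$, provide more annuli than zeros; hence some annulus $r_{j-1}<|s-s_0|\le r_j$ is zero-free (this, not the subconvexity exponent $1/6$, is where $0.17$ comes from). Taking $r=r_j^{1+\delta/3}$, one writes $\zeta'/\zeta$ as the sum of $\sum_{|\rho-s_0|\le r_{j-1}}(s-\rho)^{-1}$ and a function $g$ analytic in $|s-s_0|\le r_j$ whose value on $|s-s_0|=r$ is $a_0+O(r_j^{\delta/3}\log T)$ by a Cauchy-coefficient estimate. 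The explicit zero-sum contributes $\sum(\sigma-\beta)/|s-\rho|^2$, which is handled in two regimes with no reference to which side of the line the $\rho$ lie on: when $|\sigma-1/2|\ll r_{j-1}$ it is $O(r_j^{\delta/3}\log T)$ because every relevant zero has $|\beta-1/2|\le 2r_{j-1}\ll r\le|s-\rho|$; when $\sigma$ is further left it is nonpositive because then $\sigma<\beta$ for every zero inside the inner circle. Finally $\operatorname{Re}a_0=-\tfrac12\log T+O(1)$ is pinned down by evaluating at the single point where the circle meets the critical line and comparing with the functional-equation formula you already quoted. The lesson is that the zero-free annulus lets you treat the nearby zeros as an explicit, sign-controlled sum rather than as objects to be matched against zeros of the derivative.
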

 
 Non-real zeros of $\zeta(s)$ lie symmetrically with respect to the critical line. In  this sense, the result of Spira \cite[Corollary 3]{Spira69} that $\zeta(1/2+it)=0$ if $\zeta'(1/2+it)=0$ can be regarded as a border case of the above theorem when 
$r=0$. 

Note that for both $\zeta(s)$ and \(\zeta'(s)\) the average gap between zeros is $2\pi/\log T$ around height $T$ (Titchmarsh \cite[Section 9.4]{Titchmarsh1986} and Berndt \cite{Berndt1970}). This is much larger than the radius $r$ in Theorem \ref{dertest}.

In Theorem \ref{dertest}, the constant $0.17$ is related to the  number of zeros of $\zeta(s)$ in the strip $|t-T|\le1/T$. For details see  Section \ref{proofs} which contains the proof  of Theorem~\ref{dertest}. Moreover, in Section \ref{proofTh4} we consider a more general  version of Theorem~\ref{dertest} devoted to the extended Selberg 
class $S$. The extended Selberg class  contains most of the classical $L$-functions (Kaczorowski \cite{Kaczorowski06}). This class also includes zeta-functions for which RH is not true, a well-known example  being the Davenport-Heilbronn
zeta-function, which is defined as a suitable linear combination of two Dirichlet $L$-functions (Titchmarsh \cite[Section
10.25]{Titchmarsh1986}, see also Kaczorowski and Kulas
\cite{Kaczorowski07}). In the next section we also investigate zero trajectories of the following family of zeta-functions from  $S$:

\begin{align}\label{fstau}
  f(s,\tau) := (1 - \tau)(1 + \sqrt{5}/5^s) \zeta (s)  + \tau L (s, \psi),
\end{align}
where $\tau \in [0, 1]$  and $L (s, \psi)$ is the Dirichlet $L$-function
with the Dirichlet character $\psi\bmod 5$, $\psi(2)=-1$. 

\section{Extended Selberg class}\label{proofTh4}

 We consider Theorem \ref{dertest} in the broader context of the extended Selberg class. Note that Levinson and Montgomery's \cite[Theorem 1]{Levinson1974}  approach, which is used here, usually works for zeta-functions having nontrivial zeros  distributed symmetrically with respect to the critical line. See Y{\i}ld{\i}r{\i}m \cite{yildirim96} for Dirichlet $L$-functions; \v Sle\v zevi\v cien\. e \cite{rasa} for the
Selberg class; Luo \cite{Luo2005},  Garunk\v stis \cite{Garunkstis2008}, Minamide \cite{Minamide2009}, \cite{Minamide2010}, \cite{Minamide2013}, Jorgenson and Smailovi\' c \cite{js} for  Selberg
zeta-functions and related functions; Garunk\v stis and \v Sim\.enas \cite{gs} for the extended Selberg class.
 In Garunk\v stis and Tamo\v si\= unas \cite{garunkstistamosiunas} the Levinson and Montgomery result was generalized to the Lerch zeta-function with equal parameters. Such function has an almost symmetrical distribution of non-trivial zeros with respect to the line $\sigma=1/2$. Insights which helped to overcome difficulties raised by ``almost symmetricity" in  \cite{garunkstistamosiunas}   led to Theorem \ref{dertest} of this paper, although $\zeta(s)$  has a strictly symmetrical zero-distribution.

We recall the definition of the extended Selberg class (see \cite{Kaczorowski06, Kaczorowski99, 
Steuding07}).  A not identically vanishing Dirichlet series

\begin{equation*}
 F(s)=\sum_{n=1}^{\infty} \frac{a_n}{n^s},
  \end{equation*}
  which converges absolutely for $\sigma>1$, belongs to the {\it
    extended Selberg class} $S$ if
  \renewcommand{\labelenumi}{(\roman{enumi}) }
  \begin{enumerate}
  \item (Meromorphic continuation) There exists $k\in\mathbb N$ such
    that $(s-1)^k F(s)$ is an entire function of finite order.
  \item (Functional equation) $ F(s)$ satisfies the functional
    equation:
    \begin{equation}\label{eq:selbergfunctional}
      \Phi(s) = \omega \overline{\Phi(1 - \overline{s})},
    \end{equation}
    where $\Phi(s) : = F(s) Q^s \prod_{j = 1}^r \Gamma(\lambda_j s +
    \mu_j)$, with $Q>0$, $\lambda_j > 0$, $\Re(\mu_j) \geq 0$ and
    $|\omega| = 1$.
  \end{enumerate}
  The data $Q$, $\lambda_j$, $\mu_j$ and $\omega$ of the
  functional equation are not uniquely determined by $F$, but the value
  $d_{ F} = 2 \sum_{j=1}^r \lambda_j$ is an invariant. It is called
  the \emph{degree} of $F$.

If the element of   $S$  also satisfies the Ramanujan hypothesis  ($a_n\ll_\varepsilon n^\varepsilon$ for any $\varepsilon>0$) and has a certain Euler product, then it belongs to the Selberg class introduced by Selberg \cite{Selberg92}.

We collect several properties of  $F(s)\in S$. The functional equation \eqref{eq:selbergfunctional} gives, for $F(1/2+it)\ne0$,

\begin{equation*}
\Re \frac{F'}{F}(1/2+it)=-\Re\sum_{j = 1}^{r} \lambda_j\frac{\Gamma'}{\Gamma}(\lambda_j (1/2+it) +
  \mu_j)-\log Q.
\end{equation*}
Then by the formula

\begin{equation*}
  \frac{\Gamma'}{\Gamma}(s) = \log s
  + O\left(|s|^{-1}\right)\quad( \Re(s) \geq 0, \ |s|\to\infty)
\end{equation*}
we get, for $F(1/2+it)\ne0$ and $d_F>0$, 
\begin{equation}\label{logr11/2}
\Re \frac{F'}{F}(1/2+it)=-\frac {d_F}2\log t -\log Q+O\left(\frac1t\right)\qquad(t\to\infty),
\end{equation}
where the implied constant may depend only on $\lambda_j$, $\mu_j$, $j=1,\dots, r$.

Every $F\in S$ has a zero-free half-plane, say $\sigma>\sigma_F$. By the functional equation, $F(s)$ has no zeros for $\sigma<-\sigma_F$, apart from possible trivial zeros coming from the poles of the $\Gamma$-factors. Let $\rho=\beta+i\gamma$ denote a generic zero of $F(s)$ and
 
\begin{equation*}
N_F(T)=\#\left\{\rho : F(\rho)=0, |\beta|\le\sigma_F, |\gamma|<T\right\}.
\end{equation*}
Then (Kaczorowski and Perelli \cite[Section 2]{Kaczorowski99})
\begin{equation}\label{VonMangoldt}
N_F(T)=\frac{d_F}{\pi}T\log T+c_FT+O(\log T)
\end{equation}
with a certain constant $c_F$, for any fixed $F\in S$ with $d_F>0$.

From the Dirichlet series expression for $F$ we see that there are constants $\sigma_1=\sigma_1(F)>1$ and $c=c(\sigma_1, F)>0$ such that 
\begin{equation}\label{gec}
|F(\sigma_1+it)|\ge c, \qquad t\in\mathbb R.
\end{equation}

 It is known (Garunk\v{s}tis and \v Sim\.enas \cite[formula (12)]{gs}) that  there is   $B=B(F,\sigma_1)>0$ such that
\begin{equation}\label{TB}
   |F(\sigma+iT)|<T^B, \quad (T>10),
\end{equation}
 for $\sigma\ge-4\sigma_1$. The specific constant $-4\sigma_1$ will be useful in the proof of Theorem \ref{prop} below.

In view of above, for given positive constants $\sigma_1$, $c$, $B$, $\varepsilon$, $\delta$,  $\bar T$, $\lambda_j$, and complex constants $\mu_j$ ($\Re(\mu_j)>0$, $j=1,\dots, r$),  we define a subclass $\bar{S}\subset S$ as the following: it  consists of functions satisfying \eqref{gec}, \eqref{TB}, \eqref{eq:selbergfunctional}, with any $|\omega|=1$; we require that any  function from $\bar{S}$ has no more than
\begin{equation}\label{1log2}
\frac{\varepsilon }{\log (2+\delta)} \log T-2
\end{equation}
zeros in the area $|t-T|\le1/T$, $T>\bar T$. For each function from $S$  the  Riemann-von Mangoldt type formula \eqref{VonMangoldt} yields the existence of  $\varepsilon$, $\delta$, and $\bar T$ such that the zero number bound \eqref{1log2} is fulfilled.

Theorem \ref{dertest} will be derived from the following more general statement. 

\begin{theorem}\label{prop}
Let $F(s)$ be an element of $\bar S$ with $d_F>0$.  Then there is a constant $T_0=T_0(\bar S)>0$ for which the following statement is true.

 If $A$ and 
  $s_0=\sigma_0+iT$ satisfy the inequalities 

\begin{equation}\label{ae}
A>\varepsilon, \quad T>T_0, \quad 1/2-\exp(-T^A)<\sigma_0\le1/2,
\end{equation}
 then there is a radius $r=r(F)$,

\begin{equation*}
\exp(-T^A)\le r\le\exp(-T^{A-\varepsilon}),
\end{equation*}
 such that in the area

\begin{equation}\label{area}
\{s : |s-s_0|\le r\ {\text{and}}\ \sigma<1/2\}
\end{equation}
functions $F(s)$ and $F'(s)$ have the same number of zeros.
\end{theorem}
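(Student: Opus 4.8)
The plan is to translate the statement into one about the winding number of $F'/F$ on the boundary of the region \eqref{area} and then to win it by choosing the radius $r$ so that the circle $|s-s_0|=r$ never separates a zero of $F$ from the nearby zero of $F'$ that accompanies it. First I would fix the region in \eqref{area}, denote it $D$, and suppose (as I may, once $r$ is chosen) that neither $F$ nor $F'$ vanishes on $\partial D$. Writing $\partial D=\mathcal L\cup\mathcal A$, where $\mathcal L$ is the segment of the critical line cut off by the disk and $\mathcal A$ is the complementary arc lying in $\sigma<1/2$, the argument principle applied in turn to $F$ and to $F'$ yields
\[
 n_{F'}(D)-n_{F}(D)=\frac{1}{2\pi}\,\Delta_{\partial D}\arg\frac{F'}{F},
\]
where $n_F(D),n_{F'}(D)$ denote the numbers of zeros of $F,F'$ in $D$. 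Thus it suffices to force the right-hand side, an integer, to be $0$. On $\mathcal L$ this is easy: by \eqref{logr11/2} and $d_F>0$ one has $\Re (F'/F)(1/2+it)<0$ for $t$ near $T$—the symmetric pairs of zeros $\rho,1-\overline\rho$ contribute nothing to the real part on the line, while the $\Gamma$-factor supplies the dominant term $-\tfrac{d_F}{2}\log t$—so $F'/F$ maps $\mathcal L$ into the open left half-plane and $|\Delta_{\mathcal L}\arg(F'/F)|<\pi$.

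The heart of the matter is the arc, and here the admissible range of radii is exploited. Since $r\le\exp(-T^{A-\varepsilon})\ll 1/\log T$, the disk $|s-s_0|\le r$ lies in the strip $|t-T|\le 1/T$, so by \eqref{1log2} it meets at most $\frac{\varepsilon}{\log(2+\delta)}\log T-2$ zeros of $F$; all of them, and all zeros of $F'$ that can reach $D$, lie within $O(r)$ of the critical line, and the zeros of $F$ occur in mirror pairs $\rho=\beta+i\gamma$, $1-\overline\rho$. Near such a pair the local expansion $\frac{F'}{F}(s)=\frac{1}{s-\rho}+\frac{1}{s-(1-\overline\rho)}+h(s)$ with $\Re h\sim-\tfrac{d_F}{2}\log T$ produces a companion zero $\rho'$ of $F'$ on the \emph{left} of the line, at distance $\asymp(\log T)(1/2-\beta)^2$ from $\sigma=1/2$, hence closer to the line than $\rho$; conversely every left zero of $F'$ near height $T$ arises in this way. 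Consequently a left zero $\rho$ of $F$ and its companion $\rho'$ contribute equally to the two counts \emph{provided} $C_r$ does not separate them.

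Each pair forbids the radii lying strictly between $|\rho-s_0|$ and $|\rho'-s_0|$; on the logarithmic scale $-\log r$ this forbidden interval has endpoints in ratio at most $2+\delta$ (for large $T$). As there are at most $\frac{\varepsilon}{\log(2+\delta)}\log T-2$ pairs, strictly fewer than the $\frac{\varepsilon}{\log(2+\delta)}\log T$ intervals of ratio $2+\delta$ needed to cover $[\exp(-T^{A}),\exp(-T^{A-\varepsilon})]$—a range of positive length precisely because $A>\varepsilon$—the forbidden intervals cannot fill the whole range. I would take $r$ in the complement: then every companion pair lies jointly inside or outside $D$, the winding along $\mathcal A$ cancels in pairs, and $|\Delta_{\partial D}\arg(F'/F)|<2\pi$, forcing it to vanish. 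Equivalently, the pairing is a bijection between the zeros of $F$ and of $F'$ in $D$, so $n_{F'}(D)=n_F(D)$.

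The main obstacle is the quantitative local analysis underlying the pairing: I must establish the partial-fraction expansion of $F'/F$ uniformly on the shrinking disk, with the background $-\tfrac{d_F}{2}\log T$ and its variation controlled sharply enough to place $\rho'$ on the correct side of $\sigma=1/2$ and to rule out spurious zeros of $F'$, and I must bound the number of zeros of $F'$ in the disk (not covered by \eqref{1log2}) by combining Jensen's inequality with the growth bound \eqref{TB} and the lower bound \eqref{gec}. The zeros of $F$ lying on $\mathcal L$, together with the multiplicity bookkeeping, are handled by indenting $\partial D$ into $\sigma<1/2$ and invoking Spira's observation that on $\sigma=1/2$ the zeros of $F'$ are zeros of $F$. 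Finally, Theorem \ref{dertest} follows by checking, via \eqref{VonMangoldt}, that $\zeta(s)$ admits admissible data $\varepsilon,\delta,\bar T$ in \eqref{1log2}, the value $0.17$ being the resulting constant.
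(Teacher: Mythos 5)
Your overall framework --- the argument principle for $F'/F$ on the boundary of the region, negativity of $\Re(F'/F)$ on the critical-line segment coming from the functional equation, and a pigeonhole over the range $[\exp(-T^A),\exp(-T^{A-\varepsilon})]$ made possible by $A>\varepsilon$ and the zero-count \eqref{1log2} --- matches the paper's. The genuine gap is your treatment of the arc. You replace the needed estimate there by a ``companion zero'' pairing: each zero $\rho=\beta+i\gamma$ of $F$ left of the line is claimed to generate exactly one zero $\rho'$ of $F'$ at distance $\asymp(\log T)(1/2-\beta)^2$ from the line, every left zero of $F'$ near height $T$ is claimed to arise this way, and each pair is claimed to forbid only one ratio-$(2+\delta)$ interval of radii. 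None of this is established, and it is essentially a local form of the theorem you are trying to prove. The expansion $\frac{F'}{F}(s)=\frac1{s-\rho}+\frac1{s-(1-\overline\rho)}+h(s)$ with $h$ nearly constant is valid only when the pair $\rho,1-\overline\rho$ is isolated from all other zeros at the relevant scale, and \eqref{1log2} does not guarantee that: up to $c\log T$ zeros may cluster within distance $\exp(-T^A)$ of $s_0$, and for a cluster of $k$ zeros of $F$ the nearby zeros of $F'$ do not organize into a one-to-one companion correspondence. The placement of $|\rho'-s_0|$ relative to $|\rho-s_0|$ on the doubly logarithmic scale, which your forbidden-interval count requires, also rests on this unproven pairing. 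Finally, even granting the pairing, ``the winding along $\mathcal A$ cancels in pairs'' is not a consequence of the zeros being jointly inside or outside $D$; you would be concluding $n_{F'}=n_F$ directly from the bijection, which is exactly the unproved step.

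The paper sidesteps all of this. The pigeonhole is used to find a zero-free \emph{annulus} $r_{j-1}<|s-s_0|\le r_j$ (each zero of $F$ excludes at most one annulus), and Lemma \ref{crho} then shows that $\Re(F'/F)(s)\le-\frac{d_F}{2}\log T-\log Q+O(1/T)$ on the whole circle $|s-s_0|=r$, $\sigma\le1/2$, with $r=r_j^{1+\delta/3}$: one subtracts the poles inside $r_{j-1}$, bounds the Taylor coefficients of the remainder via Titchmarsh's Lemma $\alpha$ together with \eqref{gec} and \eqref{TB}, pins down the constant term $\Re a_0$ by evaluating at a point of the critical line where \eqref{logr11/2} applies, and notes that the subtracted sum $\sum(\sigma-\beta)/|s-\rho|^2$ is either $O(r_j^{\delta/3}\log T)$ (for $\sigma$ within $O(r_{j-1})$ of $1/2$) or $\le0$ (further left). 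With $\Re(F'/F)<0$ on the entire indented contour the total change of argument is less than $\pi$, hence zero. If you wish to keep your route you must actually prove the companion-zero bijection, which is substantially harder than the negativity estimate; I recommend adopting the latter.
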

Note that in Theorem \ref{prop} the constant $T_0$ is independent of $A$ and $\sigma_0$. This will be important in the proof of Theorem \ref{cor} below.

In  \cite{gs}   zeta-functions $f(s,\tau)$ defined by \eqref{fstau} were considered.
By Kaczorowski and Kulas
\cite[Theorem 2]{Kaczorowski07} we have that for any $\tau$ and any interval $(a,b)\subset (1/2,1)$ the function $f(s,\tau)$ has infinitely many zeros in the half-strip $a<\sigma<b$, $t>0$. 
Let $\theta>0$ and let

$$\rho : (\tau_0-\theta, \tau_0+\theta) \to\mathbb C$$
 be a continuous function such that $f(\rho(\tau), \tau)=0$ for $\tau\in(\tau_0-\theta, \tau_0+\theta)$. We say that $\rho(\tau)$ is a zero trajectory of the function $f(s, \tau)$. Analogously we define  a zero trajectory  $\tilde{\rho}(\tau)$ of the derivative $f'_s(s, \tau)$. See also the discussion below the formula (6) in \cite{gs}.
In \cite{gs}  several zero trajectories $\rho(\tau)$ of   $ f(s,\tau)$ and  $\tilde{\rho}(\tau)$ of  $ f'_s(s,\tau)$  were  computed. The behavior of these zero trajectories correspond well to Theorem \ref{prop}. 
Computations in \cite{gs} should be considered as heuristic because the accuracy was not controlled explicitly. Next we present a rigorous statement concerning  zero trajectories of $ f(s,\tau)$ and  $ f'_s(s,\tau)$.

\begin{theorem}\label{cor}
Let $\tau_0 \in [0, 1]$. Let $s=\rho_0$ be a second order zero of $f(s)=f(s,\tau_0)$ with $\Re(\rho_0)=1/2$ and sufficiently large $\Im(\rho_0)$. Then the following two statements are equivalent.
\renewcommand{\labelenumi}{\arabic{enumi}.}
\renewcommand{\labelenumii}{(\roman{enumii})}
\begin{enumerate}
\item[1)] There is   a zero trajectory $\rho(\tau)$, $\tau\in(\tau_0-\theta, \tau_0+\theta)$, $\theta>0$, of $f(s, \tau)$ such that
\begin{enumerate}
\item $\rho(\tau_0)=\rho_0$;
\item 
$\Re(\rho(\tau))=1/2$ if $\tau<\tau_0$; 
\item $\Re(\rho(\tau))<1/2$, if $\tau>\tau_0$.
\end{enumerate}
\item[2)] There is   a zero trajectory $\tilde{\rho}(\tau)$, $\tau\in(\tau_0-\eta, \tau_0+\eta)$, $\eta>0$, of $f'_s(s, \tau)$ such that
\begin{enumerate}
\item $\tilde{\rho}(\tau_0)=\rho_0$;
\item $\Re(\tilde{\rho}(\tau))>1/2$ if $\tau<\tau_0$; 
\item $\Re(\tilde{\rho}(\tau))<1/2$, if $\tau>\tau_0$.
\end{enumerate}
\end{enumerate}
\end{theorem}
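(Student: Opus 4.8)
The plan is to reduce each of the two statements to a single sign condition on the Taylor data of $f$ at the second–order zero $(\rho_0,\tau_0)$, and then to show, using the functional equation, that the two conditions coincide. The starting point is that $\zeta(s)$, the factor $1+\sqrt5\,5^{-s}=1+5^{1/2-s}$ and $L(s,\psi)$ (with $\psi$ the quadratic character $\bmod 5$) all have real Dirichlet coefficients, and that the two summands $(1+5^{1/2-s})\zeta(s)$ and $L(s,\psi)$ share the same completed functional equation. Hence the whole family satisfies one holomorphic functional equation
\[
f(s,\tau)=\Delta(s)\,f(1-s,\tau),\qquad \Delta(s)=(5/\pi)^{1/2-s}\,\frac{\Gamma((1-s)/2)}{\Gamma(s/2)},
\]
with $\Delta$ \emph{independent of} $\tau$. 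First I would record the two properties of $\Delta$ that drive the argument: $|\Delta(\rho_0)|=1$, and $(\log\Delta)'(\rho_0)\in\mathbb R$ with $(\log\Delta)'(\rho_0)<0$ once $\Im(\rho_0)$ is large. The last point follows by taking the logarithmic derivative of the functional equation on the line, which gives $2\Re(f'/f)(1/2+it)=(\log\Delta)'(1/2+it)$, and then invoking \eqref{logr11/2}; this is exactly where the hypothesis ``sufficiently large $\Im(\rho_0)$'' is used.

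Next I would carry out the local analysis. Writing $u=s-\rho_0$, $v=\tau-\tau_0$ and using $f(\rho_0,\tau_0)=f'_s(\rho_0,\tau_0)=0$ together with $c:=\tfrac12 f''_{ss}(\rho_0,\tau_0)\ne0$ and $d:=f'_\tau(\rho_0,\tau_0)$, the zeros of $f(\cdot,\tau)$ near $\rho_0$ are $\rho_0\pm\sqrt{\beta v}\,(1+o(1))$ with $\beta=-d/c$ (Weierstrass preparation / Puiseux), while $\rho_0$ is a \emph{simple} zero of $f'_s(\cdot,\tau_0)$, so by the implicit function theorem $f'_s$ has a unique smooth zero trajectory $\tilde\rho(\tau)=\rho_0+\mu v+O(v^2)$ with $\mu=-f''_{s\tau}(\rho_0,\tau_0)/f''_{ss}(\rho_0,\tau_0)$. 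I would then translate the statements into signs: Statement~2 holds iff $\Re\mu<0$ (then $\Re\tilde\rho-\tfrac12$ has sign $-v\,\Re\mu$ near $\tau_0$), while Statement~1 holds iff $\beta>0$ — in that case the vertical on-line split for $\tau<\tau_0$ joins continuously at $\rho_0$ to the off-line split for $\tau>\tau_0$, and the \emph{exact} reflection symmetry $s\mapsto1-\bar s$ (each such zero being its own isolated mirror image) forces these zeros to lie exactly on, respectively strictly off, the line.

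The core is to make the symmetry link $\beta$ and $\mu$. Differentiating the functional equation and evaluating at the fixed point $\rho_0=1-\bar\rho_0$, using the real coefficients, yields $f'_\tau(\rho_0,\tau_0)=\Delta(\rho_0)\overline{f'_\tau(\rho_0,\tau_0)}$ and $f''_{ss}(\rho_0,\tau_0)=\Delta(\rho_0)\overline{f''_{ss}(\rho_0,\tau_0)}$. Writing $\Delta(\rho_0)=e^{i\phi}$, both $c$ and $d$ therefore lie on the ray $e^{i\phi/2}\mathbb R$; say $c=c_0e^{i\phi/2}$, $d=d_0e^{i\phi/2}$ with $c_0,d_0\in\mathbb R$. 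In particular $\beta=-d/c=-d_0/c_0$ is real, and Statement~1 $\iff\beta>0\iff d_0c_0<0$. Differentiating once more in $s$ gives the inhomogeneous relation $f''_{s\tau}(\rho_0,\tau_0)=\Delta'(\rho_0)\overline{f'_\tau(\rho_0,\tau_0)}-\Delta(\rho_0)\overline{f''_{s\tau}(\rho_0,\tau_0)}$; substituting $c,d\in e^{i\phi/2}\mathbb R$ and isolating the real part of $f''_{s\tau}(\rho_0,\tau_0)e^{-i\phi/2}$ collapses the inhomogeneous term to $(\log\Delta)'(\rho_0)$ and produces $\Re\mu=-\,d_0\,(\log\Delta)'(\rho_0)/(4c_0)$. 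Since $(\log\Delta)'(\rho_0)<0$, Statement~2 $\iff\Re\mu<0\iff d_0c_0<0$, the very condition equivalent to Statement~1; the degenerate case $d_0=0$ makes $\beta=0$ and $\Re\mu=0$, so both statements fail together. This proves the equivalence.

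The step I expect to be the main obstacle is the last computation: controlling the inhomogeneous term $\Delta'(\rho_0)\overline{f'_\tau(\rho_0,\tau_0)}$ in the relation for $f''_{s\tau}$ and verifying that, after imposing $c,d\in e^{i\phi/2}\mathbb R$, it reduces \emph{exactly} to the real, negative quantity $(\log\Delta)'(\rho_0)$ rather than to a term of uncontrolled phase; this is what ties the two signs together and where the large–$\Im(\rho_0)$ hypothesis becomes decisive. A secondary technical point is upgrading the leading–order Puiseux description of the zeros of $f$ to the exact on-line/off-line assertions of Statement~1, for which the full reflection symmetry, not merely its linearization, must be used.
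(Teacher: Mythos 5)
Your route is genuinely different from the paper's. The paper never computes Taylor coefficients: it applies Theorem \ref{prop} (equal zero counts of $F$ and $F'$ in tiny half-disks left of the line, with $T_0$ uniform in $\tau$) to transfer ``$f$ has/has no zero with $\sigma<1/2$'' into ``$f'_s$ has/has no zero with $\sigma<1/2$'', combined with Weierstrass preparation for the two zero branches $s_{1,2}(\tau)$, the reflection $s_2=1-\overline{s_1}$, the implicit function theorem for $\tilde\rho$, and the Spira-type fact that $f'_s(1/2+it,\tau)=0$ forces $f(1/2+it,\tau)=0$. Your reduction to the sign identity $\Re\mu=-d_0(\log\Delta)'(\rho_0)/(4c_0)$ is correct as far as it goes: I checked the relations $d=\Delta(\rho_0)\bar d$, $f''_{ss}=\Delta(\rho_0)\overline{f''_{ss}}$, $f''_{s\tau}+\Delta(\rho_0)\overline{f''_{s\tau}}=\Delta'(\rho_0)\bar d$, the reality and negativity of $(\log\Delta)'(\rho_0)$ on the line for large ordinate, and your upgrade from the Puiseux leading term to the exact on-/off-line dichotomy via reflection invariance of the (two-point) local zero set. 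In the generic case this gives a self-contained, purely local proof that buys an explicit criterion ($d_0c_0<0$) the paper does not state.

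The genuine gap is the degenerate case $d_0=0$, i.e.\ $f'_\tau(\rho_0,\tau_0)=L(\rho_0,\psi)-(1+5^{1/2-\rho_0})\zeta(\rho_0)=0$, which the theorem's hypotheses do not exclude (it means $\rho_0$ is a common zero of the two constituents, conjecturally impossible but not known to be). There $\beta=\Re\mu=0$ and your claimed equivalences ``Statement 1 $\iff\beta>0$'' and ``Statement 2 $\iff\Re\mu<0$'' are no longer established by the leading-order analysis; asserting ``both statements fail together'' is not a proof. Statement 1 can still be killed by your own symmetry argument (one root is constantly $\rho_0$, so the other is its own mirror image and stays on the line, contradicting (iii)). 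But to kill Statement 2 you must rule out that the higher-order terms of $\tilde\rho(\tau)$ push it left of the line for $\tau>\tau_0$ while $f$ has no zeros there; a term $\Re(\tilde\rho(\tau))-1/2\sim c_3(\tau-\tau_0)^3$ with $c_3<0$ would satisfy 2(b)--(c) with $\Re\mu=0$. The only available tool for excluding this is precisely the Speiser/Levinson--Montgomery input (Theorem \ref{prop}, or the inequality $\Re(f'/f)(s)<0$ on an indented contour just left of the line) that your proposal never invokes. Either add that one application of Theorem \ref{prop} for the case $d_0=0$, or push your Taylor computation to all orders there; as written, the equivalence is proved only when $d_0\ne0$.
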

From the proof  we see that Theorem \ref{cor} remains true if all inequalities $\tau<\tau_0$ and $\tau>\tau_0$ are simultaneously replaced by opposite inequalities. 

According to computations of \cite{gs} there are 1452 zero trajectories $\rho(\tau)$ of $f(s,\tau)$ with $0<\Im \rho(0)\le 1500$, 1166 of these trajectories stay on the critical line, while the remaining 286 leave it. The points at which mentioned trajectories leave the critical line are double zeros of $f(s)=f(s,\tau)$ (see also a discussion at the end of Section 3 in Balanzario and S\'{a}nchez-Ortiz \cite{bs2007}). In view of this we expect that the family $f(s,\tau)$, $\tau\in[0,1]$ has infinitely many double zeros lying on the line $\sigma=1/2$.
Moreover, we think that the similar statement to Theorem \ref{cor} can also be proved in the case where $s=\rho$ 
is a higher order zero of $f(s)=f(s,\tau)$ with $\Re \rho=1/2$, however there is no evidence such zeros exist.

The next section is devoted to the proofs of Theorems \ref{dertest}, \ref{prop}, and \ref{cor}.

\section{Proofs}\label{proofs}

Proof of Theorem \ref{prop} is based on the next lemma. Recall that the subclass $\bar S$ depends on constants $\sigma_1$, $c$, $B$, $\varepsilon$, $\delta$, $\bar T$, $\lambda_j$, $\mu_j$, ($j=1,\dots, r$).
		  		
\begin{lemma}\label{crho}
Let $F(s)$ be an element of $\bar S$ with $d_F>0$. Suppose that $s_0=\sigma_0+iT$ satisfies the inequality $\quad 1/2-\exp(-T^A)<\sigma_0\le1/2$, where $T>\bar T$ and  $A>\varepsilon$.
 Then there is a radius $r=r(F)$,
\begin{equation}\label{expta}
\exp(-T^A)\le r\le\exp(-T^{A-\varepsilon}),
\end{equation}
 such that,
for $|s-s_0|=r$, $\sigma\le1/2$,
 \begin{equation}\label{Nm}
\Re \frac{F'}{F}(s)\le -\frac{d_F}2\log T -\log Q+O\left(\frac1T\right),
\end{equation}
uniformly for $F(s)\in \bar S$.
\end{lemma}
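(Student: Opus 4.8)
The plan is to reduce the pointwise bound \eqref{Nm} to an estimate for a short sum over the zeros of $F$ nearest to the height $T$, and then to select the radius $r$ so as to render that sum harmless.

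First I would combine the Hadamard factorisation of the finite-order entire function $(s-1)^kF(s)$ with the functional equation \eqref{eq:selbergfunctional} to obtain the partial-fraction expansion
\[
\frac{F'}{F}(s)=b+\sum_{\rho}\Bigl(\frac{1}{s-\rho}+\frac{1}{\rho}\Bigr)-\frac{k}{s-1}-\log Q-\sum_{j}\lambda_j\frac{\Gamma'}{\Gamma}(\lambda_j s+\mu_j),
\]
the sum running over the nontrivial zeros $\rho=\beta+i\gamma$ of $F$. Writing $s_*=1/2+iT$ and subtracting the value of this expression at $s_*$ removes the constants $b$, $\sum_\rho 1/\rho$ and $\log Q$; since $|s-s_*|\le 2r$ is smaller than any power of $T$ and $s_*$ is far from the pole $s=1$, the term $-k/(s-1)$ and the $\Gamma$-terms change only by $O(r/T)$, while \eqref{logr11/2} evaluates $\Re(F'/F)(s_*)$. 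This yields
\[
\Re\frac{F'}{F}(s)=-\frac{d_F}{2}\log T-\log Q+O\Bigl(\frac1T\Bigr)+\Re\sum_{\rho}\Bigl(\frac{1}{s-\rho}-\frac{1}{s_*-\rho}\Bigr),
\]
so everything comes down to showing that the zero-sum is $\le O(1/T)$ for $s$ on the part of the circle with $\sigma\le1/2$.

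Next I would discard the distant zeros: for $|\gamma-T|>1/T$ one has $|s_*-\rho|\ge 1/T\gg r$, whence $1/(s-\rho)-1/(s_*-\rho)=O(r/|s_*-\rho|^2)$, and summing against the density \eqref{VonMangoldt} gives a total of $O(rT^2\log T)=O(1/T)$ because $r\le\exp(-T^{A-\varepsilon})$ decays faster than any power of $T$. Only the zeros with $|\gamma-T|\le1/T$ then remain, and by \eqref{1log2} their number $m$ satisfies $m\le\frac{\varepsilon}{\log(2+\delta)}\log T-2$. Grouping these near zeros into functional-equation pairs $\{\rho,1-\overline{\rho}\}$, which share their ordinate and have real parts symmetric about $1/2$, the two members of a pair cancel at $s_*$, so $\Re\sum_{\mathrm{near}}1/(s_*-\rho)=0$ and only $\Re\sum_{\mathrm{near}}1/(s-\rho)$ is left. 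A zero on the critical line contributes $(\sigma-\tfrac12)/|s-\rho|^2\le0$, while a direct computation shows that an off-line pair at distance $\kappa=\tfrac12-\beta$ from the line contributes a quantity that is non-positive when $\kappa$ is small compared with $r$ and is $O(r/\kappa^2)$ otherwise.

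The crux—and the step I expect to be the main obstacle—is the choice of $r$. A near off-line pair is controlled by one of two mechanisms: its contribution is non-positive once $\kappa\le r/10$, and is $O(1/(mT))$ once $\kappa\ge\sqrt{2mrT}$. The radii escaping both form, on the logarithmic scale, the interval $\log r\in(2\log\kappa-\log(2mT),\ \log\kappa+O(1))$, whose left endpoint carries the factor $2$ inherited from the square in $r/\kappa^2$. Over the admissible window $\log r\in[-T^A,-T^{A-\varepsilon}]$ of \eqref{expta} such bad intervals, one per near zero, can chain up to cover the window only when the scales $\log\kappa$ descend no faster than geometrically with ratio $2$; a full covering would therefore need at least $\sim\frac{\varepsilon}{\log2}\log T$ near zeros. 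Since \eqref{1log2} allows at most $\frac{\varepsilon}{\log(2+\delta)}\log T-2<\frac{\varepsilon}{\log2}\log T$ of them—the slack being precisely the surplus $\log(2+\delta)>\log2$—the bad intervals cannot exhaust the window, and an admissible $r$ survives for which every near off-line pair meets one of the two mechanisms, so the surviving positive terms sum to $O(1/T)$; combined with the two preceding paragraphs this gives \eqref{Nm}. Finally I would check that every implied constant depends only on the data $\sigma_1,c,B,\varepsilon,\delta,\bar T,\lambda_j,\mu_j$ defining $\bar S$, and in particular not on $A$ or $\sigma_0$, which is what secures the uniformity in \eqref{Nm} and the independence of $T_0$ used after Theorem~\ref{prop}.
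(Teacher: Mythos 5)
Your overall strategy---reduce \eqref{Nm} to the contribution of the zeros with $|\gamma-T|\le 1/T$ and then pigeonhole on $r$ using the bound \eqref{1log2}---is the same in spirit as the paper's, and your covering count (needing $\sim\frac{\varepsilon}{\log 2}\log T$ bad intervals versus the available $\frac{\varepsilon}{\log(2+\delta)}\log T-2$ zeros) is exactly the role $\delta$ plays there. But the dichotomy you pigeonhole on is keyed to the wrong parameter, and its first branch is false. You classify a near off-line pair only by $\kappa=\tfrac12-\beta$, its distance from the critical line, and claim its contribution is non-positive once $\kappa\le r/10$. This ignores the ordinate $\gamma$ of the zero, i.e.\ the distance of the zero from the \emph{circle} $|s-s_0|=r$. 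Concretely (take $\sigma_0=1/2$): suppose $F$ has a zero at $\rho=\tfrac12-\tfrac{r}{100}+i\gamma$ with $\gamma-T=r\sqrt{1-(1/200)^2}$, so $|\gamma-T|<1/T$ and $|\rho-s_0|\approx 1.00005\,r$ (just outside the circle). At the point $s=\tfrac12-\tfrac{r}{200}+i\gamma$ of the circle one has $u=\tfrac12-\sigma=\tfrac{r}{200}$, $v=t-\gamma=0$, and the pair $\{\rho,1-\overline{\rho}\}$ contributes
\begin{equation*}
\frac{\kappa-u}{(\kappa-u)^2+v^2}-\frac{\kappa+u}{(\kappa+u)^2+v^2}=\frac{200}{r}-\frac{200}{3r}=\frac{400}{3r}>0,
\end{equation*}
which dwarfs $\frac{d_F}{2}\log T$; yet $\kappa=r/100\le r/10$ and $\log r$ lies outside the bad interval you assign to this zero. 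The danger is not small $\kappa$ per se but the circle passing close to an off-line zero, and your two mechanisms never exclude that. The paper's proof is built precisely to exclude it: it pigeonholes on the distances $|\rho-s_0|$ (not on $\kappa$), producing a zero-free annulus $r_{j-1}<|s-s_0|\le r_j$ with $r_{j-1}=r_j^{2+\delta}$ and placing the circle at $r=r_j^{1+\delta/3}$ strictly inside it, so every zero is separated from the circle by a multiplicative gap; the inner zeros then contribute $O(r_j^{\delta/3}\log T)$ or are non-positive. Any repair of your argument would have to add this third constraint on $r$, which essentially reproduces the paper's construction.

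A secondary gap: you anchor the argument at $s_*=\tfrac12+iT$ and invoke \eqref{logr11/2} there, but that formula requires $F(\tfrac12+iT)\ne 0$, and in the intended application (Theorem \ref{cor}) $s_0$ is taken next to a multiple zero \emph{on} the critical line, so $F(s_*)=0$ is the case of interest; moreover your cancellation $\Re\sum_{\mathrm{near}}1/(s_*-\rho)=0$ and the distant-zero bound $O(rT^2\log T)$ both need $s_*$ to be a non-zero of $F$ within distance $O(r)$ of $\tfrac12+iT$. This is fixable (zeros are isolated), but the paper sidesteps it cleanly by evaluating the constant term $\Re a_0$ at the point where the circle meets the line $\sigma=\tfrac12$, a point guaranteed to be zero-free by the annulus construction. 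Your final paragraph on uniformity of the implied constants is correct in intent but moot until the main dichotomy is repaired.
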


\begin{proof}
We repeat the steps of the proof of Proposition 4 in \cite{garunkstistamosiunas}. Contrary to  Proposition 4, here   we do not need the upper bound for $\varepsilon$ (see \eqref{1log2}). This is because the ``symmetric" functional equation \eqref{eq:selbergfunctional} leads to the convenient formula \eqref{logr11/2}, while the ``almost symmetric" functional equation of the Lerch zeta-function with equal parameters in \cite{garunkstistamosiunas} leads to a more restricted version of \eqref{logr11/2} (see \cite[Lemma 3]{garunkstistamosiunas}).

Let $T>\bar T$ and $r_k=\exp\left(-(2+\delta)^{-k}T^A\right)$, $k=1,\dots,[\frac{\varepsilon }{\log (2+\delta)} \log T]$.  
By \eqref{1log2} and Dirichlet's box principle there is $j=j(F)\in\{2,\dots,[\frac{\varepsilon }{\log (2+\delta)} \log T]\}$ such that the region
\begin{equation}\label{ring}
r_{j-1}<|s-s_0|\le r_j
\end{equation}
has no zeros of $F(s)$. Then the auxiliary function
\begin{equation}\label{FF}
g(s):=\frac{F'}{F}(s)-\sum_{\rho\, :\, |\rho-s_0|\le r_{j-1}}\frac{1}{s-\rho}
\end{equation}
is analytic in the disc $|s-s_0|\le r_j$ and in this disc we have
\begin{equation}\label{Cauchy}
g(s)=\sum_{n=0}^\infty a_n(s-s_0)^n\quad\text{and}\quad a_n=\frac1{2\pi i}\int\limits_{|s-s_0|= r_j}\frac{g(s)ds}{(s-s_0)^{n+1}}.
\end{equation}

In view of bounds \eqref{gec} and \eqref{TB},  Lemma $\alpha$ from Titchmarsh \cite[Section 3.9]{Titchmarsh1986} gives that, for $|s-s_0|\le r_j$,

\begin{equation*}
\frac{F'}{F}(s)=\sum_{\rho\, :\, |\rho-(\sigma_1+iT)|\le 2\sigma_1}\frac{1}{s-\rho}+O(\log T).
\end{equation*}
Recall that $\sigma_1$ was defined before $\eqref{gec}$.
Here and elsewhere in this proof  the constants  in big-$O$ and $\ll$ notations may only depend on the subclass $\bar S$.
By the last equality,  the zero free region (\ref{ring}), and \eqref{FF} we get

\begin{equation*}
g(s)=\sum_{\rho\, :\, |\rho-(\sigma_1+iT)|\le 2\sigma_1\ \text{and}\atop |\rho-s_0|> r_j}\frac{1}{s-\rho}+O(\log T).
\end{equation*}
Using this expression in the integral for $a_n$  we obtain that
\begin{equation}\label{newan}
a_n
\ll
r_j^{-n}\log T \quad (n\ge1).
\end{equation}

Let us choose 

$$r=r_j^{1+\delta/3}.$$
 Clearly, the bounds \eqref{expta} are satisfied. By \eqref{Cauchy} and \eqref{newan}, for $|s-s_0|=r$, we have
$$
g(s)=a_0
 +O\left( r_j^{\delta/3} \log T\right).
$$ 
Hence, for $|s-s_0|=r$,  the expression \eqref{FF} gives
\begin{equation}\label{takingrealparts}
\Re \frac{F'}{F}(s)
=
\Re a_0+\sum_{\rho\, :\, |\rho-s_0|\le r_{j-1}}\frac{\sigma-\beta}{|s-\rho|^2}
+O\left(r_j^{\delta/3}  \log T\right).
\end{equation}
For $|\rho-s_0|\le r_{j-1}$, $|s-s_0|=r$, $1/2-(\Re s_0-1/2+ r_{j-1})\le\sigma\le1/2$, and large $T$, we have that
$
|\sigma-\beta|\le 4r_{j-1}$
and
$
|s-\rho|^2>r_j^{2+2\delta/3}/2.
$
Then by \eqref{1log2} 
we get

\begin{equation*}
\sum_{\rho\, :\, |\rho-s_0|\le r_{j-1}}\frac{\sigma-\beta}{|s-\rho|^2}\ll r_j^{\delta/3}\log T.
\end{equation*}
Consequently, by \eqref{takingrealparts},
\begin{equation}\label{isgasdino}
\Re \frac{F'}{F}(s)
=\Re a_0 +O\left(r_j^{\delta/3} \log T\right).
\end{equation}

The region \eqref{ring} is zero-free. Thus  $F(s)$ does not vanish on the circle $|s-s_0|=r$. By  instantiating  (\ref{logr11/2}) and \eqref{isgasdino} to a single $s$ on the intersection of $|s-s_0|=r$ and $\sigma=1/2$ we obtain that
\begin{equation}\label{rea}
\Re a_0=-\frac{d_F}2 \log T-\log Q+O\left(\frac1T\right)+O\left(r_j^{\delta/3} \log T\right).
\end{equation}

Hence,  for $|s-s_0|=r$ and $1/2-(\Re s_0-1/2+ r_{j-1})\le\sigma\le1/2$, 
\begin{equation}\label{final1}
\Re \frac{F'}{F}(s)
=-\frac{d_F}2 \log T -\log Q+O\left(\frac1T\right).
\end{equation}

 If  $|s-s_0|=r$ and $\sigma<1/2-(\Re s_0-1/2+ r_{j-1})$, then 

 \begin{equation*}
\sum_{\rho\, :\, |\rho-s_0|\le r_{j-1}}\frac{\sigma-\beta}{|s-\rho|^2}\le 0
\end{equation*}
and, in view of formulas \eqref{takingrealparts}, \eqref{rea},
\begin{equation}\label{final2}
\Re \frac{F'}{F}(s)
\le-\frac{d_F}2 \log T -\log Q+O\left(\frac1T\right).
\end{equation}
The expressions \eqref{final1} and \eqref{final2}, together with the zero free region \eqref{ring}, prove Lemma~\ref{crho}.
\end{proof}

\begin{proof}\hspace{-1.4mm}{\bf of Theorem \ref{prop}} \ \
Let

$$
R=\{s : |s-s_0|\le r\ {\text{and}}\ \sigma<1/2\},
$$
where $r$ is from Lemma \ref{crho}. To prove the theorem, it is enough to consider the difference in the number of zeros of $F(s)$ and $F'(s)$ in the region $R$.

 We consider the
change of $\arg F'/ F(s)$ along the appropriately indented boundary
$R'$ of the region $R$.  More precisely,  the left side
of $R'$ coincides with the circle segment $\{ s :  |s-s_0|=r, \sigma\le 1/2\}$. To
obtain  the right-hand side of the contour of $R'$, we take the
right-hand side boundary of $R$ and deform it to bypass the zeros of
$F(1/2+it)$ by left semicircles with an arbitrarily small radius.
In \cite[proof of Theorem 1.2]{gs} it is showed that on the right-hand side
 of $R'$ the inequality 
 \begin{equation}\label{logderf}
 \Re \frac{F'}{F}(s)<0
\end{equation}
is true. 
Then, in view of Lemma \ref{crho}, we have that the inequality \eqref{logderf} is valid on the whole contour $R'$. Therefore, the
change of $\arg F'/ F(s)$ along the contour $R'$ is less than $\pi$. This proves Theorem \ref{prop}.
\end{proof}

\begin{proof}\hspace{-1.4mm}{\bf of Theorem \ref{dertest}} \ \ The Riemann zeta-function is an element of degree $1$ of the extended Selberg class (Kaczorowski \cite{Kaczorowski06}). By  Trudgian \cite[Corollary 1]{Trudgian14} we see that, for large $T$, the Riemann zeta-function has less than $0.225\log T$ zeros in the strip $|t-T|\le1/T$. Thus in the formula \eqref{1log2} we choose $\varepsilon=0.17$ and $\delta=0.1$. Then Theorem \ref{dertest} follows from Theorem \ref{prop}. 
\end{proof}

\begin{proof}\hspace{-1.4mm}{\bf of Theorem \ref{cor}} \ \ 
We will use Theorem \ref{prop}.  Next we show that there is a subclass $\bar S$ such that $f(s,\tau)\in \bar S$  for all $\tau\in[0,1]$.  In view of the definition \eqref{fstau} of $f(s,\tau)$ we see that there are  constants $c$, $B$,
and $\sigma_1$ independent of $\tau$ for which the bounds \eqref{gec} and \eqref{TB} are valid.  By this and Jensen's theorem, similarly as in Titchmarsh \cite[Theorem 9.2]{Titchmarsh1986}, we get that there are  constants $\varepsilon$, $\delta$, and $\bar T$ independent of $\tau$ for which the zero number bound \eqref{1log2} is true. The function $f(s)=f(s,\tau)$ satisfies the functional equation (\cite[formula (3)]{gs})
\begin{equation}
  \label{eq:compfunctional}
  f(s) = 5^{-s + 1/2}2 (2 \pi)^{s - 1}\Gamma(1-s) \sin \left( \frac{\pi
      s}{2} \right) f(1 - s)
\end{equation}
which is independent of $\tau$, thus the  constants $\lambda_j$, $\mu_j$ are also independent of $\tau$. This proves the existence of required $\bar S$. Therefore in Theorem \ref{prop} with $F(s)=f(s,\tau)$ it is possible to choose $T_0$, which is  independent of $\tau$. Further in this proof we assume that $\Im(\rho_0)>T_0+10$.

We consider a zero trajectory $\rho(\tau)$ of $f(s,\tau)$ which satisfies $\rho(\tau_0)=\rho_0$. 
The two variable function $f(s,z)$ is holomorphic in a neighborhood of any

$$(s,z)\in \mathbb C^2\setminus \{(1,z) : z\in\mathbb C\}.$$
By conditions of the theorem we have that $\rho_0\ne1$, $f(\rho_0,\tau_0)=0$,
 \begin{equation}\label{dervatives}
\frac{\partial f(\rho_0, \tau_0)}{\partial s}=0,\quad\text{and}\quad\frac{\partial^2 f(\rho_0, \tau_0)}{\partial s^2}\ne0.
\end{equation} 
By \eqref{dervatives} and by the Weierstrass preparation theorem (Krantz and Parks \cite[Theorem 5.1.3]{KP2013}) there exists a polynomial 

$$p(s,\tau)=s^2+a_1(\tau)s+a_0(\tau),$$
where each $a_j(\tau)$ is a holomorphic function in a neighborhood of $\tau=\tau_0$ that vanishes at $\tau=\tau_0$, and there is a function $u(s,\tau)$ holomorphic and nonvanishing in some neighborhood $N$ of $(\rho_0,\tau_0)$ such that 
\begin{equation}\label{up}
f(s,\tau)=u(s,\tau)p(s,\tau)
\end{equation}
holds in $N$. Solving 
$s^2+a_1(\tau)s+a_0(\tau)=0$
we get
\begin{equation}\label{2sol}
s_{1,2}=s_{1,2}(\tau)=\frac{-a_1(\tau)\pm\sqrt{a_1(\tau)^2-4a_0(\tau)}}{2},
\end{equation}
where for the square-root we choose the branch defined by $\sqrt{1}=1$. Note that in the neighborhood $N$ the function $f(s,\tau)$ has no other zeros except those described by \eqref{2sol}.

Assume that the statement 1) of Theorem \ref{cor} is true.  Then in some neighborhood $U$ of $\tau=\tau_0$ the first part of trajectory $\rho(\tau)$ consists either of $\{s_1(\tau) : \tau<\tau_0, \tau\in U\}$ or of $\{s_2(\tau) : \tau<\tau_0, \tau\in U\}$. Similarly, the remaining part of trajectory $\rho(\tau)$ consists either of $\{s_1(\tau) : \tau>\tau_0, \tau\in U\}$ or of $\{s_2(\tau) : \tau>\tau_0, \tau\in U\}$.

  If  $\Re s_{1}(\tau)\ne1/2$ or $\Re s_{2}(\tau)\ne1/2$  for some $\tau$, then by the functional equation \eqref{eq:compfunctional}  we see that  $s_2(\tau)=1-\overline{s_1(\tau)}$. This and the condition {\it (iii)} give that 
\begin{equation}\label{notequal1}
s_1(\tau)\ne s_2(\tau),\quad \text{if} \quad \tau>0,\ \tau\in U.
\end{equation}
Thus $a_1(\tau)^2-4a_0(\tau)\ne0$ if $\tau>0$, $\tau\in U$. By the condition {\it (i)} we see that $\rho(\tau_0)=s_1(\tau_0)=s_2(\tau_0)$ is a double zero of $P(s)=P(s,\tau)$, therefore $a_1(\tau_0)^2-4a_0(\tau_0)=0$. Hence $a_1(\tau)^2-4a_0(\tau)$ is a non-constant holomorphic function. Then there is a neighborhood of $\tau=\tau_0$, where

\begin{equation}\label{notequal2}
s_1(\tau)\ne s_2(\tau),\quad \text{if} \quad \tau<0. 
\end{equation}

In view of formulas \eqref{dervatives}, the implicit function theorem (\cite[Theorem 2.4.1]{KP2013})) yields  the existence of $\eta>0$ and of a continuous function

 $$\tilde{\rho} : (\tau_0-\eta, \tau_0+\eta)\to\mathbb C,$$
such that $\tilde{\rho}(\tau_0)=\rho(\tau_0)=0$ and $f'_s(\tilde{\rho}(\tau), \tau)=0$. By this we get condition {\it (a)} of the second statement.

We assume that  $\eta>0$ is such that the set 

$$\{(\tilde{\rho}(\tau),\tau) : \tau\in  (\tau_0-\eta, \tau_0]\}$$
is a subset of the neighborhood $N$ (defined by \eqref{up}).
We have (\cite[Proposition 1.4]{gs}) that  $f'_s(1/2+it, \tau)=0$ implies $f(1/2+it, \tau)=0$. Then in view of 
 \eqref{notequal2}  we obtain that $\Re\tilde{\rho}(\tau)\ne1/2$ if $\tau\in  (\tau_0-\eta, \tau_0)$. By condition {\it (ii)} and by above there is a neighborhood of $(\rho_0,\tau_0)$, where $f(s,\tau)\ne0$ if $\tau<\tau_0$. Then condition {\it (b)} follows from Theorem~\ref{prop}.

Theorem \ref{prop} and condition {\it (iii)} lead to $\Re(\tilde{\rho}(\tau))<1/2$ if $\tau\in(\tau_0, \tau_0+\eta)$  and $\eta>0$ is sufficiently small. We get condition {\it (c)}. By this we proved that the  statement 1) implies the statement 2).

Assume the second statement of Theorem \ref{cor}. Then by applying Theorem \ref{prop} and reasoning similarly as above, we see  that from   the trajectories defined by \eqref{2sol} we can construct a trajectory $\rho(\tau)$ which satisfies conditions of the first statement. 
\end{proof}

{\it Acknowledgement.}
This research is funded by the European Social Fund according to the activity ‘Improvement of researchers’ qualification by implementing world-class R\&D projects’ of Measure  No. 09.3.3-LMT-K-712-01-0037.


\begin{thebibliography}{99}

\bibitem{bs2007}
 Balanzario, EP, S\' anches-Ortiz J. Zeros of the Davenport-Heilbronn counterexample. Mathematics of Computation  2007;  76 (260): 2045--2049.

\bibitem{Berndt1970}
 Berndt, BC. The number of zeros for {$\zeta ^{(k)}\,(s)$}. Journal of the London Mathematical Society. Second Series  1970;  2: 577--580.

\bibitem{Garunkstis2008}
  Garunk\v stis R.  Note on zeros of the derivative of the Selberg zeta-function.
Archiv der Mathematik 2008; 91: 238--246.  Corrigendum. Archiv der Mathematik 2009; 93: 143--143. 

\bibitem{garunkstistamosiunas}
Garunk\v{stis} R, Tamo\v{s}i\={u}nas, R. Zeros of the {L}erch zeta-function and of its derivative for equal parameters. To appear in Bulletin Mathématique de la Société des Sciences Mathématiques de Roumanie.

\bibitem{gs}
 Garunk\v{s}tis R,  \v Sim\.enas R.  On the Speiser equivalent for the Riemann hypothesis. European Journal of Mathematics  2015;  1: 337--350.

\bibitem{js}
  Jorgenson J,  Smajlovi\'c L. On the distribution of zeros of the derivative of Selberg’s zeta function associated to finite volume Riemann surfaces.   Nagoya Mathematical Journal 2017; 228: 21--71.

\bibitem{Kaczorowski06}
Kaczorowski J. Axiomatic theory of {$L$}-functions: the {S}elberg class. In: Analytic number theory, volume 1891 of Lecture Notes in Math. Springer, Berlin, 2006, pp. 133--209.

\bibitem{Kaczorowski07}
Kaczorowski J, Kulas M. On the non-trivial zeros off the critical line for {$L$}-functions from the extended {S}elberg class. Monatshefte f\"ur Mathematik 2007; 150 (3): 217--232.

\bibitem{Kaczorowski99}
Kaczorowski J,  Perelli A. The {S}elberg class: a survey. In: Number theory in progress, {V}ol. 2 ({Z}akopane-{K}o\'scielisko, 1997). De Gruyter, Berlin, 1999, pp. 953--992.


\bibitem{KP2013}
Krantz SG, Parks HR. The implicit function theorem. Modern Birkh\"{a}user Classics. Birkh\"{a}user/Springer, New York, 2013.

\bibitem{Levinson1974}
Levinson N, Montgomery H.L. Zeros of the derivatives of the {R}iemann zeta-function. Acta Mathematica 1974; 133: 49--65.

\bibitem{Luo2005}
Luo W. On zeros of the derivative of the Selberg zeta function.
American Journal of Mathematics 2005; 127: 1141--1151.

\bibitem{Minamide2009}
Minamide M.  A note on zero-free regions for the derivative of Selberg zeta functions. In:
Spectral analysis in geometry and number theory, vol. 484 of  Contemp. Math. Amer. Math. Soc., Providence, RI, 2009, pp. 117--125.

\bibitem{Minamide2010}
Minamide M. The zero-free region of the derivative of Selberg zeta functions. Monatshefte f\"{u}r Mathematik 2010; 160 (2): 187--193. 

\bibitem{Minamide2013}
Minamide  M. On zeros of the derivative of the modified Selberg zeta function for the modular group.
 The Journal of the Indian Mathematical Society. New Series 2013; 80 (3-4): 275--312. 

\bibitem{Selberg92}
Selberg, A. Old and new conjectures and results about a class of {D}irichlet series. In: Proceedings of the {A}malfi {C}onference on {A}nalytic {N}umber {T}heory ({M}aiori, 1989). Univ. Salerno, Salerno, 1992, pp. 367--385.

\bibitem{Speiser1934}
Speiser A. Geometrisches zur Riemannschen Zetafunktion.
Mathematische Annalen 1934; 110 (1): 514--521.

\bibitem{Spira69}
Spira R. On the {R}iemann zeta function.
 Journal of the London Mathematical Society. Second Series 1976; 44:  325--328.

\bibitem{Steuding07}
Steuding J. Value-distribution of {$L$}-functions. Volume 1877 of Lecture Notes in Mathematics. Springer, Berlin, 2007.

\bibitem{rasa}
\v Sle\v zevi\v cien\. e R. Speiser's correspondence between the zeros of a function and its derivative in Selberg's class of Dirichlet series. 
Fizikos ir Matematikos Fakulteto Mokslinio Seminaro Darbai.
              Proceedings of Scientific Seminar of the Faculty of Physics
              and Mathematics 2003; 6: 142-153.


\bibitem{Titchmarsh1986}
Titchmarsh EC. The theory of the Riemann zeta-function. 2nd ed., rev. by D. R. Heath-Brown. Oxford Science Publications. Oxford: Clarendon Press, 1986.

\bibitem{Trudgian14}
Trudgian  TS. An improved upper bound for the argument of the Riemann zeta-function on the critical line II. 
 Journal of Number Theory 2014; 134: 280--292.

\bibitem{yildirim96}
 Y\i ld\i r\i m CY. Zeros of derivatives of Dirichlet $L$-functions.
Turkish Journal of Mathematics 1996;  20 (4): 521--534.

\end{thebibliography}
\end{document}